\newtheorem{theorem}{Theorem}%  meant for continuous numbers
\newtheorem{proposition}[theorem]{Proposition}% 
\newtheorem{corollary}[theorem]{Corollary}
\newtheorem{observation}[theorem]{Observation}
\newtheorem{lemma}[theorem]{Lemma}
\newtheorem{conjecture}[theorem]{Conjecture}
 \newtheorem*{definition*}{Definition}
\newtheorem{problem}[theorem]{Open Problem}
\title{The minimum number of peeling sequences of a point set} %TODO Please add
\begin{document}

\author*[1,2,3]{\fnm{Dániel G.} \sur{Simon}}\email{dgsimon@renyi.hu}

\affil[1]{\orgdiv{Alfréd Rényi Institute of Mathematics, Hungary}}

\affil[2]{\orgdiv{Eötvös Loránd University, Hungary}}

\affil[3]{Research supported by ERC advanced grant no. 882971: Geoscape}

\keywords{geometry, convexity, integer sequences, point sets, recursive construction} %TODO mandatory; please add comma-separated list of keywords

%\supplement{}%optional, e.g. related research data, source code, ... hosted on a repository like zenodo, figshare, GitHub, ...
%\supplementdetails[linktext={opt. text shown instead of the URL}, cite=DBLP:books/mk/GrayR93, subcategory={Description, Subcategory}, swhid={Software Heritage Identifier}]{General Classification (e.g. Software, Dataset, Model, ...)}{URL to related version} %linktext, cite, and subcategory are optional

%\funding{(Optional) general funding statement \dots}%optional, to capture a funding statement, which applies to all authors. Please enter author specific funding statements as fifth argument of the \author macro.

%\nolinenumbers %uncomment to disable line numbering

%Editor-only macros:: begin (do not touch as author)%%%%%%%%%%%%%%%%%%%%%%%%%%%%%%%%%%
%\EventEditors{}
%\EventNoEds{0}
%\EventLongTitle{The 40th International Symposium on Computational Geometry}
%\EventShortTitle{SoCG 2024}
%\EventAcronym{SoCG}
%\EventYear{2024}
%\EventDate{June 11--14, 2024}
%\EventLocation{Athens, Greece}
%\EventLogo{}
%\SeriesVolume{}
%\ArticleNo{282}
%%%%%%%%%%%%%%%%%%%%%%%%%%%%%%%%%%%%%%%%%%%%%%%%%%%%%%

\def\R{\mathbb{R}}
\def\p{\mathbf{p}}

%TODO mandatory: add short abstract of the document
\abstract{\linespread{1}\selectfont
 \noindent Let $P$ be a set of $n$ points in $\R^d$, in general position. We remove all of them one by one, in each step erasing one vertex of the convex hull of the current remaining set. Let $g_d(P)$ denote the number of different removal orders we can attain while erasing all points of $P$ this way, and let $g_d(n)$ be the \emph{minimum} of $g_d(P)$ over all $n$-element point sets $P\subset \R^d$. Dumitrescu and Tóth showed that $g_d(n)\le(d+1)^{(d+1)^2n}$. We substantially improve their bound, by proving that $g_d(n)=O((d+d\ln{d})^{(2+\frac{(d-1)}{\lfloor d\ln{d}\rfloor})n})$. It follows that, for any $\epsilon>0$, there exist sufficiently high dimensional point sets $P\subset \R^d$ with $g_d(P)\leq O(d^{(2+\epsilon)n})$. This almost closes the gap between the upper bound and the best-known lower bound $(d+1)^n$ for large values of $d$.}

 \pacs[MSC Classification]{52C35, 52C45}
  
\maketitle

\footnotetext{No datasets were generated or analysed during the current study}

\section{Introduction}

Let $P$ be a set of $n>d$ points labelled ${1,2,...,n}$ in $\R^d$ in general position, i.e., assume that no $d+1$ of them lie on the same hyperplane. In each step, we remove exactly one vertex of the convex hull of the current point set and write down the label of the removed point. We repeat the process until removing all the points in $P$. We get a sequence of the labels, and call that sequence a "peeling sequence of $P$". We are interested in determining the \emph{minimum} number of peeling sequences a set of $n$ points can have. The maximal value is not that interesting, as we have $g_d(P)\le n!$ for every set of $n$ points, and $g_d(P)=n!$ for every set of $n$ points in convex position.

\begin{definition*}
Given a point set $P$ in general position in $\R^d$, let $g_d(P)$ denote the number of peeling sequences of $P$. 

Let 
$g_d(n)=\min_{|P|=n} g_d(P)$, where the minimum is taken over all $n$-element point sets $P$ in general position in $\R^d$.
\end{definition*}

The investigation of peeling sequences was initiated by Dumitrescu~\cite{dum1}, who studied only the $2$-dimensional problem. For $g_2(n)$, the minimum number of peeling sequences, he established the following asymptotic bounds:
$$\Omega(3^n)\le g_2(n)\leq 2^{O(n\log{\log{n}})}.$$

The upper bound later got improved significantly by Dumitrescu and G. Tóth \cite{Dumitrescu}, where they have shown $g_2(n)\leq \frac{12.29^n}{100}$. 
 
 The lower bound is simple, and can be generalized to any dimension $d$: In each step of the peeling process, where we have more than $d$ points left, the current convex hull must consist of at least $d+1$ points. Hence, in each step, we have at least $d+1$ choices to continue the peeling sequence, which gives at least $\Omega((d+1)^n)$ different sequences on $n$ points. This argument gives the best known lower bound on $g_d(n)$.
 
 In the other direction, the best known upper bounds are also due to Dumitrescu and G.~Tóth \cite{Dumitrescu}:  $$g_d(n)\leq (d+1)^{(d+1)^2n}\;\; \mbox{for every}\;\; d$$.

In the current paper, we substantially improve the above bound, for $d\geq 3$. Our main result is the following. 

 \begin{theorem}\label{1}
 For any fixed $d\geq3$ and for all $n\geq2$ integers, there is a set of $n$ points in general position in $\R^d$, that admits at most $O((d+d\ln{d})^{(2+\frac{d-1}{\lfloor d\ln{d}\rfloor})n})$ peeling sequences. In other words, we have
 $$g_d(n)\leq O((d+d\ln{d})^{(2+\frac{d-1}{\lfloor d\ln{d}\rfloor})n}).$$
\end{theorem}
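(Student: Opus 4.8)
The plan is to build the point set recursively, so that a good construction in dimension $d$ with few peeling sequences is lifted to a construction in a higher-dimensional space where the same sparseness is inherited. The natural starting point is a "generalized moment curve" or Gale-type construction: place $n$ points on a curve in $\R^d$ whose order type is neutrally cyclic, so that the convex hull at every stage of the peeling process is forced to be small — ideally, the only vertices that can be removed are a bounded number of "endpoints" of the remaining configuration rather than an arbitrary convex-position set. The crucial quantity is the branching factor $b$: if at each of the $n$ steps we have at most $b$ choices for which hull vertex to erase (up to lower-order corrections), then $g_d(P) \le b^n$ up to polynomial factors, and the content of the theorem is that one can arrange $b \approx (d+d\ln d)^{2+\frac{d-1}{\lfloor d\ln d\rfloor}}$.

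The key steps, in order, would be: (1) Fix an auxiliary parameter $k = \lfloor d\ln d\rfloor$ and work in $\R^d$ with $d = $ (something like) $2k$ or $k + (d-1)$; the exponent $2+\frac{d-1}{k}$ strongly suggests we actually realize the construction in a space of dimension $2k + (d-1)$ or similar, then reduce — so I would first pin down the exact dimension bookkeeping that makes $\lfloor d\ln d\rfloor$ appear. (2) Use a product/join construction: take the "cheap" cyclic-polytope-like set $C$ on $k$ points in a $k$-dimensional space (a neighborly configuration where peeling is maximally constrained) and combine $n/k$ translated copies, or use Gale duality to convert the problem of counting peeling orders into a problem about how quickly faces of a polytope with few vertices beyond the dimension can be shelled. (3) Bound the number of convex-hull vertices available at a generic step: here one wants a lemma saying that for the chosen configuration the current hull has at most roughly $d + O(1)$ extreme points whenever we are far from the end, so the per-step branching is essentially $d$, and the "$+2$" in the exponent absorbs the two regimes (the bulk of the peeling, contributing $\approx d^n$, and boundary effects contributing another $\approx d^n$). (4) Multiply the per-step bounds, convert $\lfloor d\ln d\rfloor$-sized blocks into the stated base $(d+d\ln d)$, and check the arithmetic gives the claimed exponent; finally observe the corollary that sending $d\to\infty$ makes $\frac{d-1}{\lfloor d\ln d\rfloor}\to 0$.

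The main obstacle I anticipate is step (3): proving that the constructed point set genuinely forces a small convex hull throughout the \emph{entire} peeling process, not just at the start. Peeling is adaptive — once some points are removed, the remaining set may have a much larger hull than the original, and an adversary choosing the peeling order will try to expose as many vertices as possible. The heart of the argument must be a structural invariant, preserved under removal of any hull vertex, that caps the number of exposed vertices; for cyclic-polytope-style sets this should follow from the fact that any subset of points on the moment curve is again in "cyclic" position, so the hull of any remaining subset has a bounded facial structure near its extreme points — but making the counting tight enough to get the constant $2$ rather than something larger is where the real work lies. A secondary obstacle is the recursion's base case and the interface between blocks: one must ensure that when the construction is a union of many small pieces, peeling within one piece does not suddenly expose many vertices of a neighboring piece; this typically requires placing the pieces on a curve of very high "moment" so that each piece sits on a facet of the global hull and is peeled essentially independently, and verifying that the cross-terms contribute only the polynomial factor hidden in the $O(\cdot)$.
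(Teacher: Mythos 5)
Your high-level architecture --- a recursive construction in which small flattened copies of the previous stage are placed at the vertices of a carefully chosen skeleton configuration, plus a bound on the per-step branching factor --- does match the paper's. But the proposal is missing the one idea that makes the counting close, and your account of where the exponent $2+\frac{d-1}{\lfloor d\ln d\rfloor}$ comes from is not right. The paper's engine is the \emph{defense number} $D_d(m)$: the minimum size of a set $S$ such that, no matter which $m-1$ points are deleted, the origin still lies in the interior of the convex hull of the rest. The paper proves $D_d(m)=d+2m-1$ exactly (upper bound from Gale's theorem on placing $2m+d-1$ points on a sphere so that every open hemisphere contains at least $m$ of them; lower bound by induction on $d$ via projecting along one point). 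The skeleton is such a defending set of $D=d+2m-1$ points, perturbed so that at most $D-m+1=d+m$ of them are ever on the hull during the first $m$ removals; the $n$ points are split into blocks, each flattened into a tiny segment pointing at the origin and placed at a skeleton vertex. So the branching factor for the \emph{simplified} sequence (which block loses a point) is $d+m\approx d\ln d$, not $d+O(1)$ as in your step (3).

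The decisive consequence of the defense property, absent from your plan, is this: as long as fewer than $m$ blocks have been exhausted, the origin is still defended, so within each segment-like block only the outermost point can be removed --- the internal order of the first $m$ blocks to disappear is completely \emph{forced}, and only the remaining $D-m$ blocks contribute recursive factors. Solving $a^n=(d+m)^n\,a^{\frac{D-m}{D}n}$ gives $a=(d+m)^{D/m}=(d+m)^{2+\frac{d-1}{m}}$; the ``$2$'' is $\frac{2m}{m}$ coming from $D=d+2m-1$, not from ``two regimes'' of the peeling, and setting $m=\lfloor d\ln d\rfloor$ yields the theorem. Without the defense mechanism (or some substitute forcing the internal order of a constant fraction of the blocks), all $D$ blocks feed into the recursion and the exponent does not converge. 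Finally, a cyclic-polytope/neighborly skeleton is the wrong object here: neighborly sets are in convex position and expose everything, whereas you need the opposite --- a configuration whose hull stays small and which shields the origin for many steps.
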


Theorem~\ref{1} immediately follows from the next statement, by choosing $m=d\ln{d}$.

\begin{theorem}\label{0}
     For any fixed integers $d\ge 3,\, m\geq1,$ and all integers $n\ge 2$, we have $$g_d(n)\leq c\cdot(d+m)^{\frac{d+2m-1}{m}n},$$ where $c$ is an absolute constant.
\end{theorem}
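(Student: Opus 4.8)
The plan is to prove Theorem~\ref{0} by a recursive construction. Fix $d\ge 3$ and a positive integer $m$. I will build, for every $n\ge 2$, a set $P\subset\R^d$ of $n$ points in general position whose number of peeling sequences is at most $(d+m)^{\frac{d+2m-1}{m}n}$ (the constant $c$ in the statement only has to absorb the handling of small $n$). The set $P$ is produced in $\lfloor n/m\rfloor$ rounds: each round takes the configuration built so far and encloses it inside a new \emph{shell} of $m$ fresh points, so that $P$ becomes a deeply nested family of convex polytopes $C_1\supset C_2\supset\cdots$, each with $m$ vertices, with $C_{i+1}$ sitting far inside $C_i$; if $m$ does not divide $n$, the leftover $r:=n\bmod m<m$ points are placed as a tiny generic cluster at the very centre. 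The scale ratios between consecutive shells must decrease extremely fast, and the vertices and facet-``pockets'' of consecutive shells must be aligned in a prescribed way (this is the part to be designed carefully; see below). A final generic perturbation of magnitude far below all scales involved puts $P$ in general position, and --- since a strict vertex of a convex hull stays a vertex under small perturbations --- one only has to make sure no interior point accidentally becomes a hull vertex at any stage; this realizability bookkeeping is where Gale's theorem (and Gale diagrams) is the convenient tool.

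The combinatorial engine is the estimate
\[
g_d(n)\ \le\ (d+m)^{\,d+2m-1}\,g_d(n-m),
\]
obtained by stripping off the outermost shell $C=C_1$. Two observations drive it. First, for \emph{any} subset $Q\subseteq P$, the restriction of any peeling sequence of $P$ to $Q$ is a valid peeling sequence of $Q$: if a point of $Q$ is a vertex of the convex hull of the current set, it is \emph{a fortiori} a vertex of the convex hull of the current set intersected with $Q$. Hence it suffices to bound, for a fixed peeling sequence $\sigma$ of $P\setminus C$, the number of peeling sequences of $P$ restricting to $\sigma$, i.e.\ the number of ways to weave the $m$ points of $C$ into $\sigma$. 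Second --- and here the geometry is used --- the construction forces the points of $C$ to be removed inside a window of at most $d+2m-1$ consecutive steps, during which the convex hull has at most $d+m$ vertices: while $C$ is being peeled, the hull consists of the surviving points of $C$, the single vertex of the convex hull of $P\setminus C$ through which the interior first emerges, and at most $d-1$ further points exposed in the pocket that emergence creates; and once the emergence begins, $C$ cannot be ``stalled'' and must be finished within $m$ more steps. Counting at most $d+m$ choices at each of at most $d+2m-1$ steps of the window (and the bounded number of positions the window can occupy in $\sigma$) yields the factor $(d+m)^{d+2m-1}$.

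Iterating the estimate $\lfloor n/m\rfloor$ times down to the central cluster, and using the trivial bounds $g_d(r)\le r!\le(d+m)^{2r}\le(d+m)^{\frac{d+2m-1}{m}r}$, telescopes cleanly:
\[
g_d(n)\ \le\ (d+m)^{(d+2m-1)\lfloor n/m\rfloor}\cdot r!\ \le\ (d+m)^{\frac{d+2m-1}{m}n},
\]
which is Theorem~\ref{0}; Theorem~\ref{1} then follows by taking $m=\lfloor d\ln d\rfloor$.

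The main obstacle is entirely geometric: one has to design the nested shells so that the recursion step is actually valid, i.e.\ so that peeling an outer shell is genuinely \emph{local} --- the emergence of the next shell does not cascade, never more than $d+m$ points lie on the hull simultaneously while that shell is removed, and the shell cannot be left half-peeled and finished only much later. Getting the shape of each $m$-vertex shell, the (rapidly shrinking) scale ratios, and the vertex/pocket alignment to cooperate, while still leaving everything in general position after the final perturbation, is where the real work lies; Gale's theorem is what makes it feasible to check that the required incidence pattern is simultaneously realizable and generic.
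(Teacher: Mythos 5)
Your proposal follows a genuinely different route from the paper (the paper distributes the points into $D_d(m)=d+2m-1$ recursively-built clusters, flattened into needle-like segments and placed at the points of an optimal ``$m$-step defending'' configuration, then counts simplified peeling sequences globally), but as it stands it has a critical gap: the ``window'' claim that drives your recursion $g_d(n)\le (d+m)^{d+2m-1}g_d(n-m)$ is unsupported, and in fact it cannot hold for a nested-shell construction. Every surviving vertex of the outer shell $C$ is an extreme point of $\mathrm{conv}(C)$ and hence of \emph{every} subset of $P$ containing it, so it is always removable but never forced to be removed; the peeler can therefore stall the last point(s) of $C$ indefinitely. Moreover, deleting points never un-exposes other points, so once the inner configuration has emerged the peeler may remove $\Omega(n)$ inner points before touching the stalled shell vertex, and sliding that vertex's removal to any earlier admissible slot produces many distinct peeling sequences of $P$ with the \emph{same} restriction $\sigma$ to $P\setminus C$. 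Thus the fibre of your restriction map is of size $\Omega(n)$ rather than bounded by the constant $(d+m)^{d+2m-1}$, and iterating the recursion then yields a factor of order $n^{\Theta(n/m)}$, which destroys the exponential bound. There is also a range issue: for $m\le d$ (which the theorem allows, and which even occurs in the application $m=\lfloor d\ln d\rfloor$ at $d=3$) a shell of only $m$ vertices cannot enclose anything, so the construction is not even defined there.

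The missing ingredient is precisely what the paper supplies: instead of hoping that a shell of $m$ points is peeled in a bounded window, it uses Gale's theorem to build a set of exactly $d+2m-1$ points that \emph{defends} the origin for $m$ steps (this exact value $D_d(m)=d+2m-1$ is where Gale enters --- not as generic realizability bookkeeping), places a flattened recursive copy at each of these $d+2m-1$ points pointing at the origin, and pays for \emph{all} interleaving information at once via the simplified sequence, which has at most $(d+m)^n$ possibilities because at most $D-m+1=d+m$ clusters are ever simultaneously active; the needle shape forces the internal order of the first $m$ clusters to vanish, so the recursion only multiplies over the remaining $d+m-1$ clusters of size about $n/(d+2m-1)$ each. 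Your accounting aims at the same per-point cost $(d+m)^{(d+2m-1)/m}$, but without a mechanism that forces the shell to disappear quickly --- and no such mechanism exists when the recursive copy sits as a single central cluster --- the key lemma fails, so the proof does not go through.
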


As $d$ tends to infinity, the upper bound in Theorem \ref{1} is asymptotically not far from $d^{2n}$.

\begin{corollary} \label{2}
    For any $\epsilon>0$, there is a real number $D=D(\epsilon)$ such that $g_d(n)=O(d^{(2+\epsilon)n})$, for all fixed $d\ge D$.
\end{corollary}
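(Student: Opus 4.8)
The plan is to derive Corollary~\ref{2} directly from the bound in Theorem~\ref{1} by a short asymptotic estimate. Writing $d+d\ln d = d(1+\ln d)$ and setting $c(d):=2+\frac{d-1}{\lfloor d\ln d\rfloor}$, Theorem~\ref{1} gives $g_d(n)\le A\cdot\bigl(d(1+\ln d)\bigr)^{c(d)n}$ for some absolute constant $A$. The goal is to absorb both the $(1+\ln d)$ factor and the excess $c(d)-2$ in the exponent into an arbitrarily small power of $d$.

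First I would take logarithms in base $d$:
\[
\log_d\Bigl(\bigl(d(1+\ln d)\bigr)^{c(d)n}\Bigr)=c(d)\,n\left(1+\frac{\ln(1+\ln d)}{\ln d}\right).
\]
Then observe that as $d\to\infty$ both correction terms vanish: $\frac{d-1}{\lfloor d\ln d\rfloor}\to 0$ (since $\lfloor d\ln d\rfloor\sim d\ln d$), and $\frac{\ln(1+\ln d)}{\ln d}\to 0$. Hence
\[
c(d)\left(1+\frac{\ln(1+\ln d)}{\ln d}\right)\ \longrightarrow\ 2 \qquad (d\to\infty).
\]
Given $\epsilon>0$, pick $D=D(\epsilon)$ so large that the left-hand side is at most $2+\epsilon$ for every $d\ge D$; then $\bigl(d(1+\ln d)\bigr)^{c(d)n}\le d^{(2+\epsilon)n}$, and therefore $g_d(n)\le A\,d^{(2+\epsilon)n}=O(d^{(2+\epsilon)n})$, with the \emph{same} absolute constant $A$ for all $d\ge D$.

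There is essentially no obstacle here; the only point worth stressing is that the polylogarithmic factor $(1+\ln d)^{c(d)n}$ is exponential in $n$ and so cannot simply be dropped, but since $(1+\ln d)^{c(d)n}=d^{\,c(d)n\cdot o(1)}$ it merely multiplies the exponent of $d$ by a factor $1+o(1)$, which is harmless once $d$ is large. Alternatively, one could bypass Theorem~\ref{1} and apply Theorem~\ref{0} directly with $m=\lceil (d-1)/\delta\rceil$ for a suitable $\delta=\delta(\epsilon)$, so that $\frac{d+2m-1}{m}=2+\frac{d-1}{m}\le 2+\delta$ while $d+m=\Theta_\delta(d)$; the same base-$d$ logarithm computation then finishes the argument. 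I would present the first route, since it reuses Theorem~\ref{1} verbatim.
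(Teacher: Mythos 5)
Your argument is correct and follows the same route the paper intends: Corollary~\ref{2} is derived from Theorem~\ref{1} by noting that $\frac{d-1}{\lfloor d\ln d\rfloor}\to 0$ and $d+d\ln d=d^{1+o(1)}$, so the exponent in base $d$ tends to $2$ (the paper leaves this routine asymptotic verification implicit). Your explicit handling of the $(1+\ln d)^{c(d)n}$ factor and of the uniformity of the constant is exactly the right bookkeeping.
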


   We build our point sets recursively: we place in $\R^d$, in a regular manner, tiny copies of our construction for smaller values of $n$, transformed into segment-like objects.

   To find the optimal placement of these segments, we need to define and solve a different problem. Suppose the origin $\mathbf{p}$ belongs to our point set, and we want to make sure that $\mathbf{p}$ does not get removed in the first $m$ steps of the peeling algorithm. We seek the minimal number of points needed to achieve that. These numbers directly affect the number of peeling sequences.
\smallskip

   \textbf{Related work.} The number of peeling sequences is an important parameter of point sets, which can be used for their classification. This number is maximal when the points are in convex position. To some extent, this value measures how far the point set is from being in convex position. Similar parameters that have been investigated are the number of \emph{triangulations} (see \cite{triang3, triang2, triang1} for some bounds), the number of \emph{polygonizations} (see \cite{poly1}, \cite{poly2}), the number of \emph{non-crossing perfect matchings} (see \cite{matching1}), etc.

    Peeling point sets has also been widely studied in several other contexts. Many results are concerned with \emph{convex-layer peeling}, where at each step we remove all vertices of the current convex hull. Such methods have been used, e.g., for proving that every sufficiently large set of points in general position in the plane has six points that form an empty hexagon \cite{gerken}\cite{nicolas}; see also \cite{dumremark}. Another area where peeling the \emph{grid} plays an important role is \emph{affine curve-shortening} \cite{alvarez, grid2, sapiro}, which has applications in computer graphics. The number of layers a point set has is called the \emph{layer number}. Bounds on the layer number of various structures have been investigated, see \cite{even1, even2, grid1} for some examples.
   
     In Section $2$, we define and analyse the auxiliary problem needed for our construction, which is interesting on its own right. In Section $3$, we describe our recursive construction and prove Theorem \ref{0}. In Section $4$, we prove our numerical theorems, Theorem \ref{1} and Corollary \ref{2}. The last section contains concluding remarks and open problems. 

   \section{Defending a point}

To define the constructions needed for the upper bounds in Theorem \ref{0}, we need to solve a completely different problem first.

 The new problem is the following: For a fixed dimension $d$, let $\p$ be the origin in $\R^d$. Find a point set $S$, such that $S\cup \{\mathbf{p}\}$ is in general position, and if we start peeling $S\cup \{\mathbf{p}\}$, $\p$ cannot get removed in the first $m$ steps. We seek to minimise $|S|$. We say that the set $S$ \emph{defends} $\p$ for $m$ steps.

\begin{definition*}[Defense number] \label{defense}
    For given positive integers $d,m$, let $D_d(m)$ denote the size of the minimal set $S$ in $\R^d$ needed to defend the origin $\p$ for the first $m$ steps of the peeling algorithm.
\end{definition*}

  \begin{observation}\label{halfobs}
    A point in $\R^d$ is on the convex hull of a point set in general position, if and only if there is a hyperplane through the point, such that there are no points in one of the open halfspaces bounded by the hyperplane.
    \end{observation}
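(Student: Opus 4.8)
The plan is to recognize Observation~\ref{halfobs} as the supporting-hyperplane characterization of boundary points of a polytope, and to establish the two implications separately. Write $P$ for the point set and $q$ for the point in the statement; given a hyperplane $H$, denote by $H^{+},H^{-}$ its two open halfspaces and by $\overline{H^{+}},\overline{H^{-}}$ the corresponding closed halfspaces. A useful preliminary remark is that, for $q\in P$ with $P$ in general position, being ``on the convex hull'' coincides with being a vertex of $\mathrm{conv}(P)$: a non-vertex boundary point of $\mathrm{conv}(P)$ would lie in the relative interior of a face $F$ of dimension at least $1$, and since $F$ lies in a hyperplane it has at most $d$ vertices, each of them a point of $P$; these together with $q$ would be at most $d+1$ affinely dependent points of $P$, contradicting general position. (In fact the equivalence itself needs neither this remark nor general position, holding verbatim for boundary points of $\mathrm{conv}(P)$; general position enters only because the peeling process removes vertices.)

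For the implication $\Leftarrow$, suppose a hyperplane $H$ passes through $q$ and one of its open halfspaces, say $H^{+}$, contains no point of $P$. Then $P\subseteq\overline{H^{-}}$, hence $\mathrm{conv}(P)\subseteq\overline{H^{-}}$, so $H$ is a supporting hyperplane of $\mathrm{conv}(P)$ through the point $q\in\mathrm{conv}(P)$; therefore $q\in\partial\,\mathrm{conv}(P)$, i.e.\ $q$ lies on the convex hull.

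For the implication $\Rightarrow$, suppose $q\in\partial\,\mathrm{conv}(P)$. Since $P$ is in general position with more than $d$ points, $\mathrm{conv}(P)$ is a full-dimensional polytope, so the supporting-hyperplane theorem yields a hyperplane $H$ through $q$ with $\mathrm{conv}(P)\subseteq\overline{H^{-}}$. Then the opposite open halfspace $H^{+}$ contains no point of $\mathrm{conv}(P)$, and a fortiori no point of $P$, which is precisely the hyperplane asserted in the statement.

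I do not anticipate any genuine obstacle here, since the statement is essentially a textbook separation fact. The only points needing a little care are the observation that a hyperplane through $q$ with one empty open side is exactly a supporting hyperplane of $P$ at $q$, and the bookkeeping of the preliminary remark linking ``boundary point of the convex hull'' with ``vertex'', which is the sole place the general-position hypothesis is used.
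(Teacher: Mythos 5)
Your proof is correct. The paper states this observation without any proof, and your argument --- identifying a hyperplane with one empty open side as a supporting hyperplane and invoking the supporting-hyperplane theorem for the converse, plus the general-position remark that boundary points of $\mathrm{conv}(P)$ belonging to $P$ are vertices --- is exactly the standard justification the author implicitly relies on.
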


 Based on the above observation, we can give an equivalent definition of the Defense number:

\begin{definition*}[Defense number] \label{defense2}
    For given positive integers $d,m$, $D_d(m)$ denotes the size of the minimal set $S$ in $\R^d$ needed to have at least $m$ points in any of the open halfspaces around the origin.
\end{definition*}

 Note that there is a similar notion in previous papers called Tukey-depth \cite{Tukey}, but it is defined via closed halfspaces, so it is not exactly equivalent to our Defense number.

 Now we determine the exact values of $D_d(m)$ for all $d$ and $m$. As a warm-up, we exhibit two simple cases. In $1$-dimension, general position means that no two points lie in the same position.

\begin{proposition} \label{easyv}
The following values are easily seen:

 a) $D_d(1)=d+1$.
 
 b) $D_1(m)=2m$.

\end{proposition}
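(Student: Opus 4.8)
The plan is to verify each part directly from the second (half-hyperspace) definition of the Defense number, which says $D_d(m)$ is the minimum size of a set $S$ such that every open half-hyperspace with the origin $\p$ on its bounding hyperplane contains at least $m$ points of $S$.

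For part a), I first show $D_d(1)\le d+1$ by exhibiting a witness: take $S$ to be the $d+1$ vertices of a simplex containing $\p$ in its interior (equivalently, $d+1$ points in general position whose convex hull contains $\p$). By Observation \ref{halfobs}, since $\p$ is in the interior of $\mathrm{conv}(S)$, no open half-hyperspace through $\p$ can be empty, so every such half-hyperspace contains at least one point, giving defense for $1$ step. For the matching lower bound $D_d(1)\ge d+1$, suppose $|S|\le d$. Then $S\cup\{\p\}$ has at most $d+1$ points, and (in general position, or after a tiny perturbation) there is a hyperplane through $\p$ and all points of $S$ — more carefully, any $d$ points together with $\p$ lie on a common hyperplane $H$; pushing $H$ slightly to one side while still passing through $\p$ (rotating about the span of $S$) puts all of $S$ strictly on one side, leaving the other open half-hyperspace empty. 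Hence $\p$ would be on the convex hull and could be removed at the first step, so $S$ does not defend $\p$. Therefore $D_d(1)=d+1$.

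For part b), in dimension $1$ a ``hyperplane through the origin'' is just the point $\p=0$ itself, and the two open half-hyperspaces are the open rays $(0,\infty)$ and $(-\infty,0)$. Defending $\p$ for $m$ steps means: after deleting any $m-1$ points, $\p$ is still in the interior of the convex hull, i.e.\ there remain points on both sides of $0$. Equivalently (second definition), each of the two open rays must contain at least $m$ points of $S$. Taking $S$ to consist of $m$ points to the left of $0$ and $m$ points to the right shows $D_1(m)\le 2m$. Conversely, if $|S|\le 2m-1$, then by pigeonhole one of the two rays contains at most $m-1$ points of $S$; deleting exactly those points leaves that side empty, so $\p$ becomes a hull vertex within $m-1\le m$ deletions. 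Hence $D_1(m)\ge 2m$, and $D_1(m)=2m$.

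The only mild subtlety — and the one place I would be careful — is the general-position bookkeeping in the lower bound of part a): I must ensure that when $|S|\le d$ the configuration $S\cup\{\p\}$ really does admit an empty open half-hyperspace through $\p$. This is immediate because fewer than $d+1$ points never positively span $\R^d$, so $\p$ cannot be in the interior of $\mathrm{conv}(S)$; applying Observation \ref{halfobs} to the (perturbed, if necessary) set $S\cup\{\p\}$ then finishes it. Everything else is a one-line pigeonhole or an explicit construction, so I do not anticipate any real obstacle here — this proposition is genuinely a warm-up for the harder determination of $D_d(m)$ in general that follows.
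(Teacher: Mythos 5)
Your proof is correct and follows essentially the same route as the paper's: a simplex centered at $\p$ for the upper bound in a) together with the observation that at most $d$ points cannot contain $\p$ in the interior of their convex hull, and $m$ points on each side of the origin with a pigeonhole count for b). The extra detail you supply (the explicit empty-half-space and pigeonhole arguments) only elaborates what the paper treats as immediate.
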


\begin{proof} $\\$
    \begin{itemize} 
    
        \item[a)] By the definition of defending for $1$ step, we need a set $S$ such that $\p$ lies in the interior of $conv(S)$. The convex hull of at most $d$ points must be degenerate in $\R^d$, so $\p$ cannot be in its interior. Hence we need $|S|\geq d+1$. To achieve that, we can select a regular simplex centered at the origin, as illustrated in Figure \ref{fig:regular simplex}.
        \item[b)] Since the dimension is $1$, points are arranged on a single line. If we want to defend the origin in the first $m$ steps, we need to place $m$ points to the left of $\p$ on the line, and $m$ points to the right, and it is obviously the optimal construction. See Figure \ref{fig:1d}.
    \end{itemize}
\end{proof}
\begin{figure}
	\begin{center}
		\includegraphics[scale=0.4]{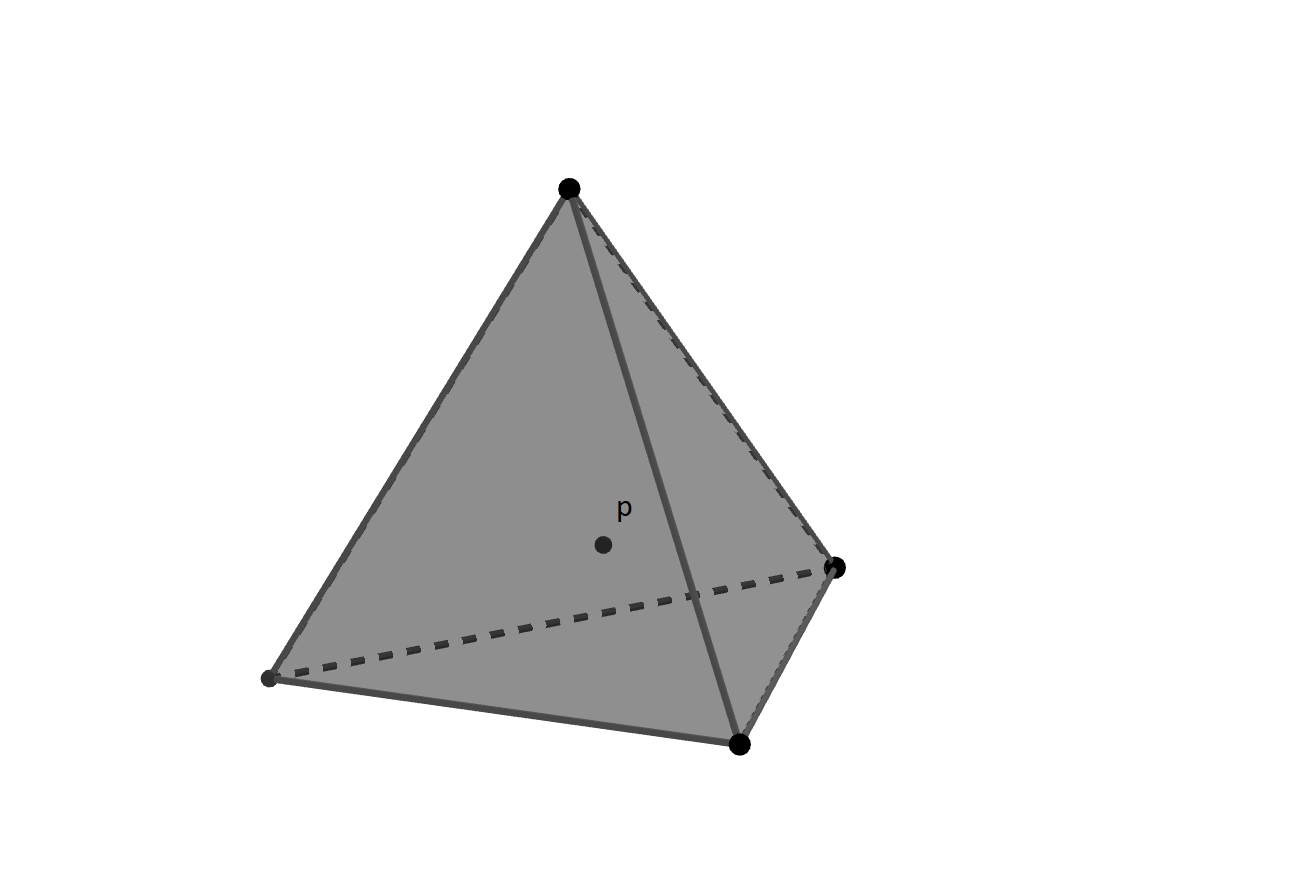}
	
		\label{fig:regular simplex}
	\end{center}
\end{figure}
\begin{figure}
	\begin{center}
		\includegraphics[scale=0.7]{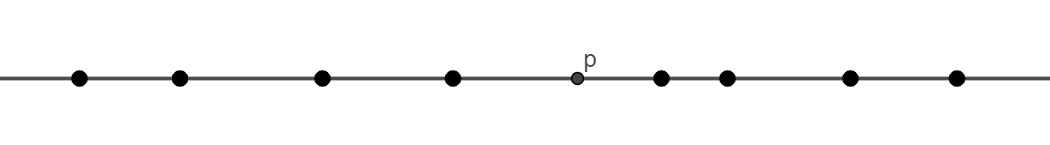}
		\caption{A minimal point set for $d=3$ and $m=1$ defending $\p$ (upper) \\ A minimal point set for $d=1$ and $m=4$ defending $\p$ (lower)}
		\label{fig:1d}
	\end{center}
\end{figure}

 Now we state and prove the general value of $D_d(m)$, which is the main result of this section. Theorem \ref{maindefense} will be used for building up constructions, and proving the main theorems of the paper.

\begin{theorem}\label{maindefense}
    For every $m\geq1$ integer, the minimal number of points needed to defend point $\p$ for $m$ steps in $\R^d$ is $D_d(m)=d+2m-1$.
\end{theorem}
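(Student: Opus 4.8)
The plan is to prove matching upper and lower bounds. For the upper bound, I would exhibit an explicit configuration of $d+2m-1$ points defending $\p$. The natural candidate, generalizing both warm-up cases in Proposition~\ref{easyv}, is to take a regular simplex of $d+1$ vertices centered at $\p$ (which handles $m=1$), and then add $m-1$ extra antipodal pairs of points placed symmetrically about the origin, roughly along $m-1$ generic directions — giving $d+1+2(m-1)=d+2m-1$ points. To verify this defends $\p$ for $m$ steps, I would use the second (equivalent) definition via Observation~\ref{halfobs}: I must check that every open half-hyperspace through the origin contains at least $m$ points of $S$. An open halfspace $H$ through $\p$ misses at most $\lfloor d/2\rfloor$... more carefully, I would argue that a halfspace through the origin contains at least one vertex of the simplex from each antipodal-pair-free configuration, and that one member of each added antipodal pair lies in $H$; combined with the fact that the simplex alone already forces enough points into $H$, this yields the count $m$. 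A clean way to organise this: show the simplex alone puts at least one point strictly inside $H$ (since $\p$ is interior to its hull, $H$ cannot miss all $d+1$ vertices — in fact it contains at least one), and each of the $m-1$ antipodal pairs contributes at least one point to $H$ (an open halfspace through the center of a symmetric pair contains exactly one of the two, as long as the pair is not parallel to the bounding hyperplane — which generic placement avoids). That gives $1+(m-1)=m$ points in $H$, as required; general position is arranged by a generic perturbation that preserves all the strict inequalities.

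For the lower bound, I need to show that any set $S$ defending $\p$ for $m$ steps has $|S|\ge d+2m-1$. I would again work with the half-hyperspace formulation: $S$ defends $\p$ for $m$ steps iff every open halfspace bounded by a hyperplane through $\p$ contains at least $m$ points of $S$. The idea is to find a single hyperplane $\pi$ through $\p$ and count. Take any hyperplane $\pi$ through $\p$ spanned by $d-1$ points of $S$ (possible once $|S|\ge d-1$; if $|S|<d-1$ we are trivially done). Rotating $\pi$ slightly to a parallel generic hyperplane or perturbing, the two open sides contain $s_+$ and $s_-$ points with $s_++s_- = |S| - (\text{points on }\pi)$. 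Since both open sides must contain $\ge m$ points, and we can choose $\pi$ to contain $d-1$ points of $S$ on it, we would get $|S|\ge (d-1) + m + m = d+2m-1$. The delicate point is ensuring we can simultaneously place $d-1$ points of $S$ on the separating hyperplane while still having $\ge m$ on each open side; I would handle this by a continuity/extremal argument: start with a generic hyperplane through $\p$ (with, say, exactly $m$ points on the minority side or as few points on it as possible), then rotate it about $\p$ sweeping points onto it one at a time, stopping before the minority side drops below $m$.

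The main obstacle I anticipate is the lower bound's counting step — specifically, rigorously guaranteeing a hyperplane through $\p$ that contains $d-1$ points of $S$ and has at least $m$ points strictly on each side. A rotation argument must be set up carefully: as the hyperplane turns, points cross from one side to the other through the hyperplane itself, and I need an invariant controlling how many points can be "parked" on the hyperplane without violating the $\ge m$ condition on the thinner side. I would formalize this by considering, over all hyperplanes $\pi$ through $\p$, the quantity (number of points of $S$ on $\pi$), maximized subject to the constraint that each open side still has $\ge m$ points, and then argue this maximum is at least $d-1$ by a dimension/genericity argument: if fewer than $d-1$ points lay on the optimal $\pi$, there would be freedom to rotate $\pi$ about the flat they span, and along this rotation one could absorb another point onto $\pi$ without emptying either side below $m$ — contradicting maximality. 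Getting the boundary cases of this rotation argument exactly right (when a side is exactly at $m$) is where the real care is needed.
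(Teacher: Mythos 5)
Your lower bound is correct, and in fact it is even easier than you think: once you use the half-hyperspace formulation (the paper's second definition of $D_d(m)$), there is nothing delicate to arrange. Pick \emph{any} $d-1$ points of $S$; together with $\p$ they span an affine flat of dimension at most $d-1$, which extends to a hyperplane $\pi$ through $\p$ containing those $d-1$ points. The defense property guarantees that \emph{every} open halfspace bounded by a hyperplane through $\p$ — including the two sides of this particular $\pi$ — contains at least $m$ points of $S$, so $|S|\ge (d-1)+m+m=d+2m-1$ immediately. The rotation/extremal machinery you flag as "the main obstacle" is solving a non-problem: you do not need to \emph{find} a hyperplane with $\ge m$ points on each side, since that is the hypothesis. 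This route is genuinely different from the paper, which proves the lower bound by induction on the dimension: it projects $S\setminus\{\mathbf{v}\}$ onto the hyperplane orthogonal to $\vec{\mathbf{vp}}$ and lifts a short peeling sequence back up, with the $1$-dimensional case as the base. Your argument is shorter and avoids the somewhat fiddly lifting step, so it is arguably the better of the two.

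The upper bound, however, has a genuine gap: the simplex-plus-antipodal-pairs construction does not defend $\p$ for $m$ steps, and no generic perturbation rescues it. The flaw is in the claim that each antipodal pair contributes a point to every open halfspace $H$ through $\p$: the halfspace is chosen \emph{after} the points are placed, and the adversary can take the boundary hyperplane of $H$ to pass through (or arbitrarily close to) one member of a pair, in which case that pair contributes nothing (or the pair's two points land on the same side of a nearby hyperplane after perturbation, again contributing nothing to the bad side). Concretely, for $d=2$, $m=2$ your construction is a centered triangle plus one (near-)antipodal pair $q,-q$; the line through $\p$ and $q$ bounds an open halfplane containing only one triangle vertex and at most one point of the pair on its far side — in the exact antipodal case it contains exactly one point, which is less than $m=2$. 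More generally, whenever $m\le d$ the adversary can choose the normal of $H$ orthogonal to all $m-1$ pair directions and nearly parallel to a facet normal of the simplex, leaving only one point in $H$. A correct construction is substantially less elementary — the paper invokes Gale's theorem, whose point set (related to the trigonometric moment curve and cyclic polytopes) satisfies the "every open hemisphere contains at least $m$ points" property for nontrivial reasons — so this half of your proof needs to be replaced rather than patched.
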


    \begin{proof} [Proof of upper bound]
    The upper bound is established through Gale's Theorem II., first presented in \cite{Gale}, and later simplified in \cite{Petty}. The theorem says, that one can place $2m+d-1$ points on $S_{d}$ (the surface of the $d+1$ dimensional sphere), such that each open hemisphere through the origin contains at least $m$ points.
    \end{proof}
    
    \begin{proof}[Proof of lower bound]
    We prove the lower bound for all point sets, not just the ones in general position. This includes having more than one point at the same position too. 
    
    By Proposition \ref{easyv} b), in one dimension, the lower bound holds, and it also remains valid without general position. For higher dimensions, we proceed by induction on $d$.
    Suppose that $D_d(m)\geq d+2m-1$ holds for any $m$ and $d\leq d_0$, where $m, d, d_0$ are all positive integers. We aim to show that the lower bound holds for $d=d_0+1$ too.

    Suppose for contradiction, that there is an $m$, such that the origin $\p$ can be defended for $m$ steps in $\R^{d_0+1}$ by $d_0+2m-1$ points. Let $S$ be such a defending point set, and let $\mathbf{v}\in S$. We can also suppose $\mathbf{v}$ is not the origin. Let $\mathbf{H}$ be the $d_0$-dimensional hyperplane through the origin, whose normal vector is $\vec{\mathbf{v}\p}$. Project all elements of $S\setminus\{\mathbf{v}\}$ onto $\mathbf{H}$. 

    After projection, we get $d_0+2m-2$ points around $\p$ in $\R^d$, not necessarily in general position. By the induction hypothesis, the origin cannot be defended for $m$ steps with that many points, hence there is a peeling sequence starting with $\mathbf{v_{i_1}',v_{i_2}',...v_{i_{m-1}}',p}$ on $\mathbf{H}$, where $\mathbf{v_i'}$ denotes the image of $\mathbf{v_i}$ after the projection. So let $\mathbf{v_{i_1}',v_{i_2}',...v_{i_{m-1}}',p}$ be the beginning of the peeling sequence of the image on $\mathbf{H}$.
    We claim $\mathbf{v_{i_1},v_{i_2},...,v_{i_{m-1}},p}$ is the beginning of a valid peeling sequence in $\R^{d_0+1}$, contradicting that $S$ defends $\p$ for $m$ steps.

    If we forget the existence of $\mathbf{v}$, the correctness of the peeling sequence is obvious, since there is a basis of vectors in which the first $d_0$ coordinates of the points are unchanged by reversing the projections. Therefore if at a given step, point $\mathbf{v_i'}$ is on the convex hull in the hyperplane, it is not a linear combination with non-negative weights of sum $<1$ of the other remaining points. This condition also stays true by taking preimages, since the first $d_0$ coordinates are unchanged. So the preimage of the peeling sequence is a peeling sequence in $\R^d$ too.

    Adding back $\mathbf{v}$ does not affect that, since the image of $\mathbf{v}$ would be $\p$, so they reach the boundary at the same time in $\mathbf{H}$, and by taking preimage, the line $\mathbf{vp}$ becomes a boundary segment of the current convex hull. So whenever $\mathbf{v}$ gets into the peeling sequence in the above way, we can switch it to $\p$.

    Hence we have a contradiction, thus we need at least $d_0+2m$ points to defend the origin for $m$ steps in $\R^{d_0+1}$, satisfying the induction step and concluding the proof.
    \end{proof}

    \section{Our Construction}

    \subsection{Constructing Base Sets}
        The construction is built recursively. The main idea is to take the optimal constructions for smaller values of $n$, transform them to look like a tiny segment, and then place copies of them in $\R^d$ such that their placement points defend the origin for many steps. We will call the set of these placement points $S$, which we soon define. This way, if the segments point towards the origin, as long as the origin is defended, we can only peel the outermost point of each segment. Consider a fixed dimension $d$, and a fixed positive integer $m$, and these stay fixed during Section 3. For each positive integer $n$ we will define a construction $S_n$, using an optimal $m$-defending set $S'$ of size $D=D_d(m)=d+2m-1$.

        Let $S'=\{\mathbf{p'_1,p'_2,...p'_{D}}\}$ be a set of minimal size in $\R^d$ defending the origin for $m$ steps in the peeling algorithm. From $S'$, we derive a special set $S$ of the same size as $S'$, in which during the first $m$ steps of the peeling algorithm, there are never more than $D-m+1=d+m$ points on the convex hull. We refer to $S$ as the base set of the construction.

  \begin{figure}
	\begin{center}
		\includegraphics[scale=0.42]{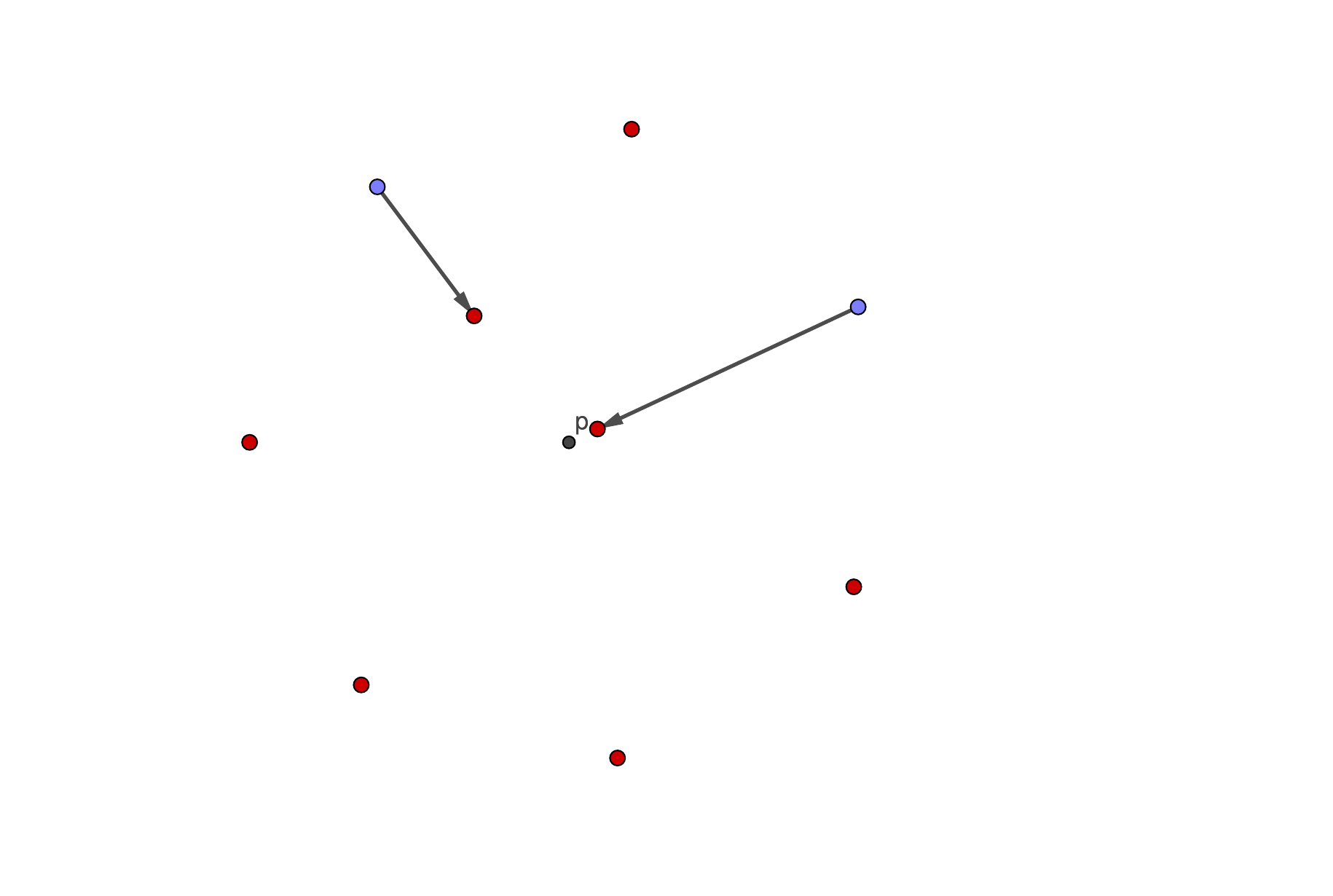}
		\caption{Creation of the base set starting from a $3$-defending point set in the plane. Red points show the points in the final set.}
		\label{fig:base}
	\end{center}
\end{figure}       

  Such a defending set can be derived from an arbitrary $S'=\{\mathbf{p_1', p_2',..., p_{D_d(m)}'}\}$ $m$-step defending set of size $D$. Rescale $S'$ such that each point has a distance $\leq1$ from the origin. Let $\mathbf{p_j}=\mathbf{p_j'}$ for all $j\leq D-m+1$. Note that $\{\mathbf{p_1,p_2,..., p_{D-m+1}}\}$ defends the origin $\p$ for $1$ step, so there is a closed ball of center $\p$ and radius $\epsilon_1$ contained in the interior of the convex hull of the set. Let $\mathbf{p_{D-m+2}}=\epsilon_1\mathbf{p_{D-m+2}'}$, where multiplying a point by a scalar $\epsilon$ denotes the transformation $(x_1, x_2,..., x_d)\rightarrow(\epsilon x_1, \epsilon x_2,..., \epsilon x_d)$.
 By Observation \ref{halfobs}, it is easy to see that scaling distances of individual points from the origin does not change the defense properties of a point set.
 So, $\{\mathbf{p_1,p_2,..., p_{D-m+2}}\}$ defends the origin $\p$ for $2$ steps, so there is a closed ball of radius $\epsilon_2$ centered at $\p$, contained in the interior of the convex hull of any subset of $D-m+1$ different points among these $D_d(m)-m+2$ points. Let $\mathbf{p_{D-m+3}}=\epsilon_2 \mathbf{p_{D-m+3}'}$. Similarly, recursively, for any $j\leq m-1$, define $\epsilon_j$ the same way, and define $\mathbf{p_{D-m+j+1}}=\epsilon_j\mathbf{p'_{D-m+j+1}}$. The set $S=\{\mathbf{p_1, p_2,..., p_{D}}\}$ satisfies the properties described at the beginning of the subsection.

  We can notice that in the resulting set, $\mathbf{p_{D-m+j}}$ can only be removed at or after the $j$-th step for all $j$, hence in each step, only one more point is added to the new convex hull. Initially, there are at most $D-m+1$ points on the convex hull, so it stays like that in all of the steps. The creation of our base set from an arbitrary set is illustrated in Figure \ref{fig:base} for $d=2$ and $m=3$.

\begin{figure}
	\begin{center}
		\includegraphics[scale=0.41]{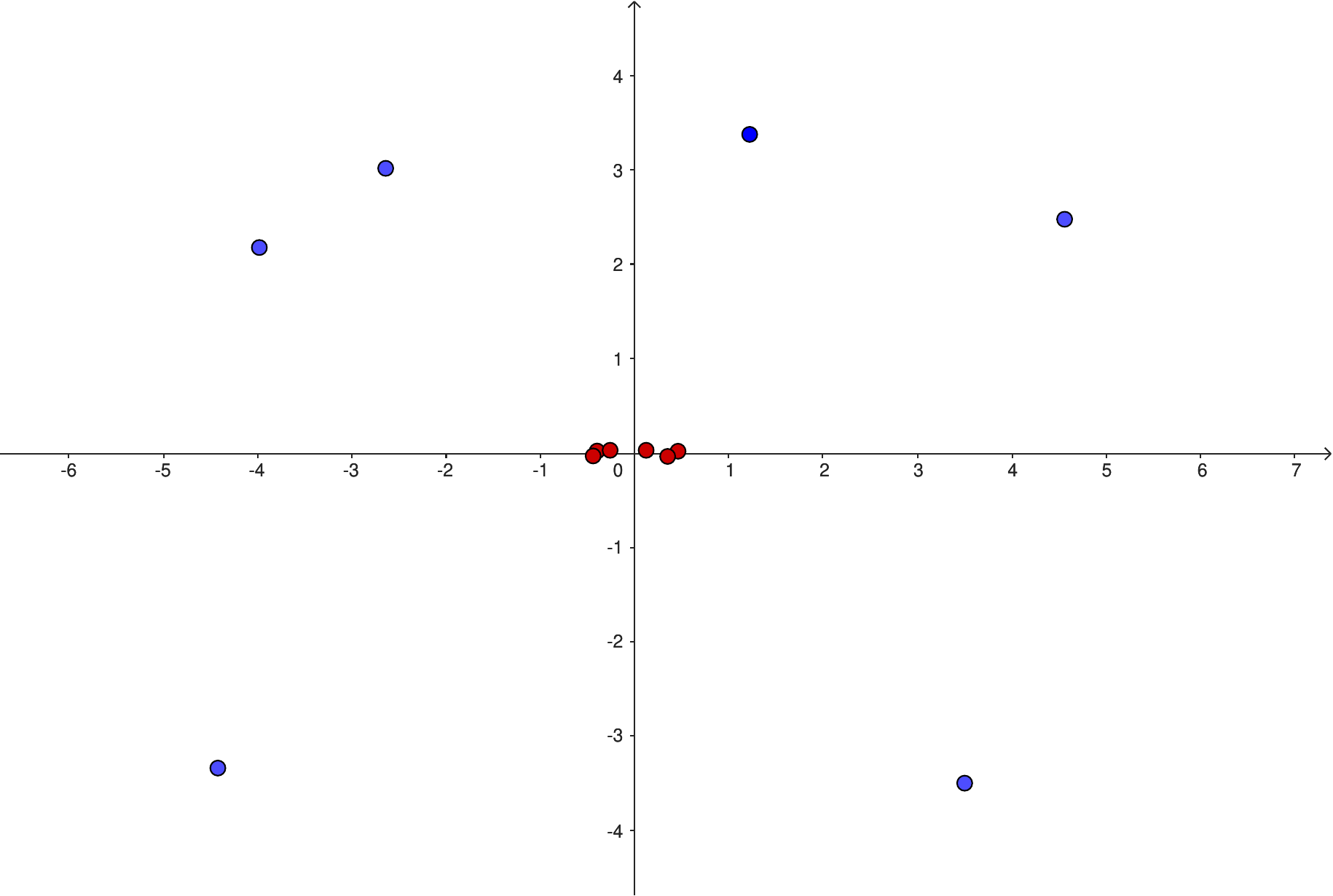}
		\caption{Transforming a point set on the plane with $\delta=0.1$ and $\epsilon=0.01$. The blue points are the original points, and the red ones are the transformed points. In practice, much smaller $\delta$ and $\epsilon$ values will be used.}
		\label{fig:segment}
	\end{center}
\end{figure}

  \subsection{Constructing $S_n$}

        The point sets of the constructions are built recursively. Let $S_n$ denote the construction for $n$ points. For $n\leq D$, pick $S_n$ to be an arbitrary subset of $S$.

       For $n>D$, partition $n$ into $D$ pieces of size $i_1,i_2,...,i_D$, such that all the pieces have size $\lfloor \frac nD\rfloor$ or $\lceil \frac nD \rceil$. Then, for any $i_j$, take a copy of $S_{i_j}$, and transform it to look like a tiny segment. This means, that we rotate $S_{i_j}$ until no two points have the same $x$ coordinates, then apply the transformation 
        $(x,y_1,y_2,...y_{d-1})\rightarrow(\delta x,\epsilon y_1, \epsilon y_2,..., \epsilon y_{d-1})$ with $\delta$ tiny, and $\epsilon<<\delta$. Call the segment-like image $S_{i_j}'$. For clarity, check Figure \ref{fig:segment}:

 Now place a copy of each $S_{i_j}'$ with one of its points being at $\mathbf{p_j}$, and rotate it such that its $x$-vector aligns with the $\vec{\mathbf{pp_j}}$ vector. In other words, the tiny segment points towards the origin. Call that rotated and placed set $A_j$ for each $1\leq j\leq D$. We will also refer to these $D$ different sets in the construction as \textbf{blocks}. For a simple three-dimensional example, see Figure \ref{fig:construction}.

 Note that $g_d(S_{i_j})=g_d(S_{i_j}')=g_d(A_j)$ since flattening, scaling, rotating, and translating do not change the inclusion properties of the convex hulls in the point set.

 By the above process, we defined a set $S_n$ for all values of $n$. In the next section, we estimate the number of peeling sequences these constructions admit.

 \begin{figure}
	\begin{center}
		\includegraphics[scale=0.32]{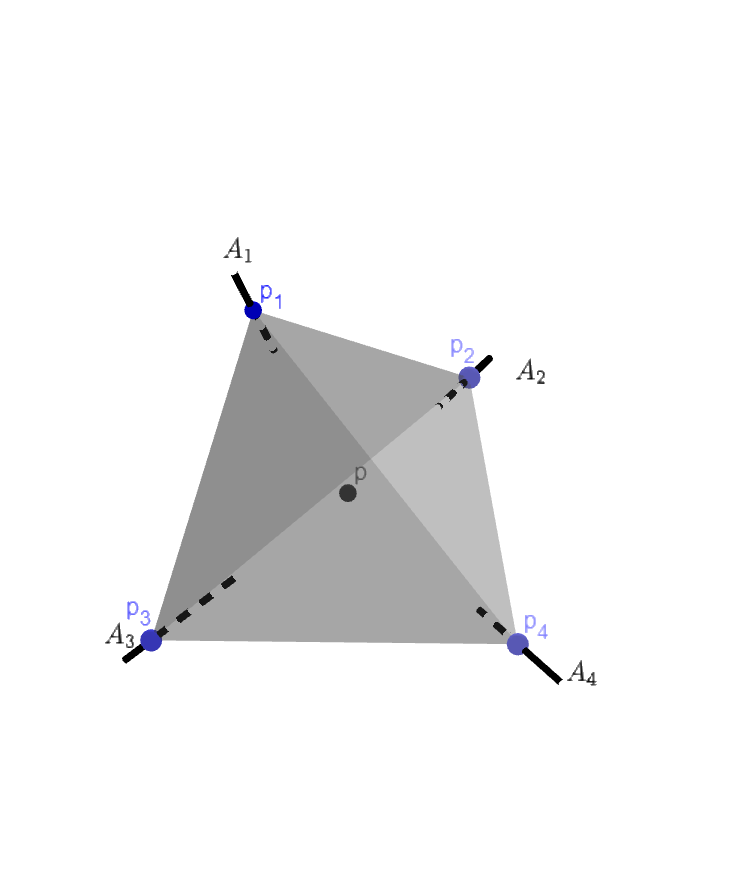}
		\caption{The recursive construction for $d=3$ and $m=1$.}
		\label{fig:construction}
	\end{center}
\end{figure}

\section{Proof of the upper bound}

 First, we need to define the concept of simplified peeling sequences, here we use the same definitions as Dumitrescu and Tóth in \cite{Dumitrescu}.

 \begin{definition*}[Simplified Peeling Sequence]
     Let $\pi$ be a peeling sequence of $S_n$. Replace every element of $A_j$ with a symbol $a_j$ in the sequence, for all $1\leq j\leq D$. The resulting sequence of $D$ different symbols is the simplified peeling sequence of $\pi$, and it is denoted $\pi^*$.
 \end{definition*}

We can think of the simplified peeling sequence as a way of seeing the order in which we remove the blocks of the construction. Each symbol $a_j$ just represents the block $A_j$.

\begin{lemma}\label{S_n}
    $g_d(S_n)\leq (D-m+1)^n\prod_{i=m+1}^{D}g_d(A_i)$, where we labelled the blocks such that $|A_1|\leq|A_2|\leq...\leq|A_D|$.
\end{lemma}

\begin{proof}
We can give an upper bound to the number of peeling sequences of $S_n$ the following way:
First, we bound from above the number of different simplified peeling sequences $S_n$ can admit. We also bound from above the maximum number of peeling sequences inducing the same simplified peeling sequence $\pi^*$. The product of these two values gives an upper bound for $g_d(S_n)$.
 
 Let $\pi_{A_j}$ denote the subsequence of $\pi$ containing all elements of $A_j$. Note that $\pi_{A_j}$ is a peeling sequence of $A_j$.

 Due to the construction of $S$, and the tiny segment-like nature of $A_j$, at any time we can only peel elements of at most $D-m+1$ different $A_j$-s. When all points in a block $A_j$ are removed, it introduces at most one additional block $A_i$ to the current boundary, ensuring that the number of blocks on the boundary remains at most $D-m+1$ at any given time.

 Hence the number of simplified peeling sequences of $S_n$ is bounded above by $(D-m+1)^n$.

 Now we need to bound the number of peeling sequences admitting the same simplified peeling sequence.

 As long as the origin is still defended by our point set during the peeling algorithm, only the outermost points (points with the largest distance from the origin) of $A_i$ may be removed for each $i$. This is because if the origin is defended, then a small ball is defended around it, hence the ball is in the current convex hull. Since $A_i$ is segment-like pointing towards the origin, any line between two points of $A_i$ intersects the ball at the center. If we take the outermost point $\mathbf{v}$ of $A_i$, then for all of the other points $\mathbf{v'}\in A_i$ the line $\mathbf{vv'}$ intersects the ball, hence $\mathbf{v'}$ must be in the interior of the current convex hull. Hence, only $\mathbf{v}$ can be removed in the current step if $A_i$ is on the boundary.

 Since $S$ defends the origin for $m$ steps in the peeling algorithm, in $S_n$ the origin remains defended as long as we have not removed $m$ distinct $A_i$ blocks entirely.

 So, consider the simplified peeling sequence $\pi^*$, and without loss of generality assume $A_1, A_2, ..., A_m$ are the first $m$ blocks that disappear entirely throughout the peeling. In other words, if we consider only the last $a_i$ for each $i$ in the simplified peeling sequence, $a_1, a_2, ..., a_m$ are the first $m$ elements.

 Then, for any $\pi$ admitting $\pi^*$ as its simplified peeling sequence, $\pi_{A_i}$  for $i\leq m$ are fixed since points of $A_i$ are removed one by one in decreasing order of distance from the origin.

 For $i\geq m+1$, we can use any upper bound we have for $g_d(A_i)$.

 The total number of peeling sequences admitting $\pi^*$ as the simplified peeling sequence is thus at most $\prod_{i=m+1}^{D}g_d(A_i)$. To get an upper bound we just relabel everything such that $g_d(A_1)\leq g_d(A_2) \leq ... \leq g_d(A_{D})$. This is equivalent with saying $|A_1|\leq|A_2|\leq...\leq|A_D|$. So for the upper bound we just assume that the remaining blocks are the largest possible out of all blocks, hence we have the maximum possible number of peeling sequences on the remaining blocks.

 Multiply that upper bound with the upper bound on the number of different simplified peeling sequences, and we proved the Lemma.
 \end{proof}

 Having settled the foundations, we can now prove Theorem \ref{0}. Let's recall the theorem for clarity.

%\begin{lemma}\label{mainlemma}
%For any fixed integers $d\geq3, m\geq1$ and all integers $n\geq2$,\\ $g_d(n)\leq c\cdot(D_d(m)-m+1)^{\frac{D_d(m)}{m}n}$ for a fixed universal constant $c$.
%\end{lemma}

\noindent \textbf{Theorem 2.} \textit{For any fixed integers $d\ge 3,\, m\geq1,$ and all integers $n\ge 2$, we have $$g_d(n)\leq c\cdot(d+m)^{\frac{d+2m-1}{m}n},$$ where $c$ is an absolute constant.}

\begin{proof}
Let $D=D_d(m)=d+2m-1$ as shown in Theorem \ref{maindefense}, and denote $a=(d+m)^{\frac{d+2m-1}{m}}$ for simplicity in the proof. Using the construction described in sections $3.1$ and $3.2$ for these fixed values of $d$ and $m$, we aim to show that the number of distinct peeling sequences of $S_n$ is at most $ca^n$. For the constant, we select $c=a^{-\frac{d}{d-1}}$.

 First, we establish the statement for $2\leq i\leq 2D$. 
 \begin{itemize}
     \item  For $2$ points we have $g_d(S_2)=2$, which is clearly less than $ca^2= a^{\frac{d-2}{d-1}}\geq4^{\frac{1}{2}}=2$ since $a\geq4$ and $d\geq3$.
     \item For $3\leq i\leq 2D$ points we use the upper bound $g_d(S_i)\leq i!$, which arises from placing all points in convex position. On the other hand, since $d+m\geq4$, we can show that $a\geq(d+m)^2\geq4(d+m)\geq 2d+4m-2=2D$.

     Hence $g_d(S_i)\leq i!\leq2(2D)^{i-2}\leq (ca^2)a^{i-2}=ca^i$ which we wanted to prove. Note that we used the $n=2$ case in the inequality.
    
 \end{itemize}

  We proceed by induction, assume $g_d(S_i)\leq ca^i$ for all $\lfloor\frac{n+1}{D}\rfloor<i<n$.

 Applying Lemma \ref{S_n}, and using that $g_d(A_i)\leq ca^{\frac{n}{D}+1}$ by the induction hypothesis, we derive: \newline
$g_d(S_n)\leq (D-m+1)^n\prod_{i=m+1}^{D}g_d(A_i) \leq (D-m+1)^n(ca)^{D-m}a^{\frac{D-m}{D}n}$.

 To complete the proof, we need to show that $(D-m+1)^n(ca)^{D-m}a^{\frac{D-m}{D}n}\leq ca^n$. We verify this separately for the coefficient and the exponential factors.
\begin{itemize}
    \item For the exponential factor we need $(D-m+1)^na^{\frac{D-m}{D}n}\leq a^n$, which holds if and only if $(D-m+1)^n\leq a^{\frac{m}{D}n}$. This is true, since $a=(D-m+1)^{\frac{D}{m}}=(d+m)^{\frac{d+2m-1}{m}}$.
    \item For the coefficient factor, we need to show $c\geq (ca)^{D-m}$. After rearranging, this requirement becomes $a^{\frac{m-D}{D-m-1}}\geq c$.  This inequality holds because: \\ $a^{\frac{m-D}{D-m-1}}=a^{-\frac{d+m-1}{d+m-2}}\geq a^{-\frac{d}{d-1}}=c$.
\end{itemize}

Thus, we conclude that \\
$g_d(S_n)\leq (D-m+1)^n\prod_{i=m+1}^{D}g_d(A_i) \leq (D-m+1)^n(ca)^{D-m}a^{\frac{D-m}{D}n}\leq ca^n$.

Hence by induction the theorem holds for any $n\geq2$.
\end{proof}

 To get the best possible upper bounds, we need to find the value $m$ that minimises $(d+m)^{\frac{d+2m-1}{m}}$. 
Using Theorem \ref{0}, we can easily deduce the following Corollary:

\begin{corollary}
    For any fixed $d\geq3$ and all $n\geq2$ integers,\\ $g_d(n)\leq c\cdot \min_{m}((d+m)^{\frac{d+2m-1}{m}n})$ for a positive real constant $c$. Minimum is taken over all positive integer values of $m$.
\end{corollary}

 Finding the minimum of that function generally is not straightforward, but for small values it can be optimised numerically, giving a much better upper bound than Theorem \ref{1} will. For general parameters, we can prove Theorem \ref{1}.

\begin{proof}[Proof of Theorem \ref{1}]
For general $d$, pick $m=\lfloor d\ln{d}\rfloor$ which is a decent approximation for the minimum point of the above expression, but not optimal in general. After substituting $m$ into Theorem \ref{0}, we get the upper bound \\
 $g_d(n)\leq c\cdot(d+d\ln{d})^{(2+\frac{d-1}{\lfloor d\ln{d}\rfloor})n}$, concluding the proof of Theorem \ref{1}.
\end{proof}

Corollary \ref{2} follows easily from here.

\begin{proof}[Proof of Corollary \ref{2}]
    Using Theorem \ref{1} and that $d$ is at least $3$,
    \\ $g_d(n)\leq O((d+d\ln{d})^{(2+\frac{d-1}{\lfloor d\ln{d}\rfloor})n})\leq O((2d\ln{d})^{(2+\frac{1}{\ln{d}})n})$. Now take logarithm to get \\ $(2+\frac{1}{\ln{d}})(\ln{d}+\ln{(2\ln{d})})= (2+\frac{1}{\ln{d}}+\frac{2\ln{(2\ln{d})}}{\ln{d}}+\frac{\ln{(2\ln{d})}}{(\ln{d})^2})\ln{d}\leq (2+\epsilon)\ln{d}$ whenever $\epsilon\geq \frac{1}{\ln{d}}+\frac{2\ln{(2\ln{d})}}{\ln{d}}+\frac{\ln{(2\ln{d})}}{(\ln{d})^2}$. Since the right side converges to $0$ as $d$ goes to infinity, for arbitrary small positive $\epsilon$ we can choose $d$ large enough. Hence the corollary holds.
\end{proof}

    \section{Concluding Remarks}

    The upper bounds provided in the paper are substantially closer to the best lower bounds than any previous result, but there is still room for improvement. Further analysis of these constructions, utilizing the tools from Section 3.2 in \cite{Dumitrescu}, could potentially yield stronger upper bounds. Nevertheless, even that analysis is not perfect, as refinement in the recursion can lead to better upper bounds at the expense of more intricate calculations. Unfortunately, we currently lack the tools to determine the exact number of peeling sequences in that specific construction. By developing some further counting methods it seems probable that one could improve the upper bound. Despite this, we do not anticipate the existence of much better constructions. It is possible that if we could accurately count the number of peeling sequences, the optimal value of $m$ for our construction might be $1$.
    
 \begin{problem}
     Determine the number of peeling sequences of the construction presented in this paper. Improve the upper bound on $g_d(n)$.
 \end{problem}
    
   The next logical step in the area should be improving the lower bound on the number of peeling sequences for any dimensions. While this question was initially posed in \cite{Dumitrescu}, I present it as a conjecture, emphasizing the need for further exploration in this direction. The existence of the fractional Erdős-Szekeres theorem \cite{Erdos-szekeres} seems to imply that there are many configurations when one can choose more than $d+1$ points to peel, making the lower bound appear weak, but we still could not improve the bound.

    \begin{conjecture}
        The minimal number of peeling sequences of $n$ points in $\R^d$ is at least $c(d+1+\epsilon)^n$ for some positive constants $c$ and $\epsilon$. So $g_d(n)=\Omega((d+1+\epsilon)^n)$.
    \end{conjecture}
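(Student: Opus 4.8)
The plan is to prove, for each fixed $d\ge 2$, a bound $g_d(P)\ge c_d\,(d+1+\epsilon)^{|P|}$ for every $P$ by strong induction on $|P|$, using the recursion
$$g_d(P)=\sum_{v}g_d(P\setminus v),$$
the sum ranging over the vertices of $\mathrm{conv}(P)$, with the exact base case $g_d(P)=|P|!$ for $|P|\le d+1$. The crude bound $(d+1)^{|P|}$ comes from using only that there are at least $d+1$ summands; the whole point is to harvest the surplus summands appearing whenever the convex hull has more than $d+1$ vertices. Call a point set \emph{thin} if its hull is a $d$-simplex and \emph{thick} otherwise.

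The first step is a two-level potential: set $\lambda=d+1+\epsilon$ and take $\phi(P)=c\,\lambda^{|P|}$ for thick $P$, $\phi(P)=c'\lambda^{|P|}$ for thin $P$, with $0<c'<c$, and $\phi(P)=|P|!$ in the base range. A short computation shows $\sum_v\phi(P\setminus v)\ge\phi(P)$ holds automatically (i) when $P$ is thick, because there are $\ge d+2$ summands each of size $\ge c'\lambda^{|P|-1}$; and (ii) when $P$ is thin but at least one of its $d+1$ removals produces a thick hull, because then the single "thick" summand of size $c\lambda^{|P|-1}$ makes up the deficit of the $d$ "thin" ones. The two resulting constraints on the constants — roughly $c'(1+\epsilon)\le c\le c'\frac{d+2}{d+1+\epsilon}$ — are simultaneously satisfiable exactly when $(d+2)\epsilon+\epsilon^2\le 1$, i.e. for $\epsilon$ of order $1/d$; so the surviving case is that of a \emph{frozen} configuration, where $\mathrm{conv}(P)$ is a simplex \emph{and} every one of the $d+1$ vertex removals again leaves a simplicial hull.

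For the frozen case the geometric core enters. If $\Delta=\mathrm{conv}(v_0,\dots,v_d)$ and $q$ is an interior point, then $q$ subdivides $\Delta$ into the $d+1$ cones $\Delta_i(q)=\mathrm{conv}(\{v_j:j\ne i\}\cup\{q\})$, which have pairwise disjoint interiors; and removing $v_i$ leaves a simplicial hull with "apex" $q$ precisely when every other interior point lies in $\Delta_i(q)$. Hence, by disjointness and general position, no interior point can be the apex of two distinct removals unless there is no other interior point at all, so a frozen configuration with $\ge 2$ interior points has $d+1$ \emph{distinct} apexes and therefore at least $d+1$ interior points, i.e. $|P|\ge 2d+2$. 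In particular no configuration with $d+3\le|P|\le 2d+1$ is frozen (the only small frozen sets are the simplex-plus-one-interior-point sets with $|P|=d+2$, handled directly in the base). Moreover each frozen removal exposes one interior point, dropping the interior count by exactly one, so a frozen streak inside any peeling must terminate by the time the interior count reaches $d$. The intended finish is to expand the recursion along such a streak — a complete $(d+1)$-ary subtree of depth at most $|P|-2d-1$ — down to the configurations where it is forced to break, and to charge the guaranteed thick removals there back against the streak; equivalently, to refine the two-level potential by an extra discount $\mu^{r(P)}$, where $r(P)$ measures how far $P$ is from being forced out of a frozen streak, tuned so the discount is recovered exactly when the streak ends.

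The main obstacle is precisely this last amortization: making it tight enough that the extra branching collected at the (possibly deep, possibly rare) break points of long frozen streaks still beats $(d+1)^{|P|}$ by a constant factor in the base. The worrisome scenario is a configuration admitting a very long frozen streak in which only a single one of the $d+1$ removals turns thick at each break; one must show such a cascade still accumulates enough surplus — which is plausible, since the interior count is strictly monotone along the streak and the thick subtrees peeled off early are themselves large by induction, but converting this into a clean inequality for the potential, with all the $d$-dependent constants (including those in the base range $|P|\le 2d+1$) made mutually consistent, is where the real difficulty lies. If this goes through, the resulting $\epsilon$ is of order $1/d$, matching the paper's expectation that the truth is close to $(d+1)^n$.
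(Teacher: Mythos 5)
The statement you are proving is posed in the paper as an open \emph{conjecture}: the paper gives no proof of it (its results are upper bounds; the only lower bound it cites is the trivial $(d+1)^n$), so there is no argument of the author's to compare yours against, and a correct proof here would be a new result, not a reconstruction. Your proposal, however, is not that proof: it is a program whose decisive step you yourself flag as unresolved ("if this goes through\dots"). The parts you do carry out are sound --- the recursion $g_d(P)=\sum_v g_d(P\setminus v)$ over hull vertices, the arithmetic of the two-level potential showing the thick and thin-but-unfrozen cases close for $\epsilon=O(1/d)$, and the distinct-apex lemma (under general position an interior point cannot serve as the exposed apex for two different vertex removals unless it is the only interior point, so a frozen set has $0$, $1$, or at least $d+1$ interior points). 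But these reductions only relocate the difficulty; they do not diminish it.

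The gap is the amortization over frozen streaks, and it is not a technical loose end but the entire content of the conjecture. Your structural lemma forces a break in a frozen streak only once the interior count has dropped to at most $d$, i.e.\ once the current set has at most $2d+1$ points. Nothing in the sketch excludes a configuration that is frozen \emph{hereditarily}: one in which every branch of the peeling tree keeps the hull a $d$-simplex with exactly $d+1$ choices per step until only $O(d)$ points remain. For such a configuration the surplus you propose to harvest at break points amounts to a multiplicative constant $C_d$ coming from the bottom $O(d)$ levels, giving only $g_d(P)\ge C_d (d+1)^n$ --- no $\epsilon>0$. Your remark that "the thick subtrees peeled off early are themselves large by induction" has no force in this scenario, because there are no early thick subtrees; and the proposed discount $\mu^{r(P)}$ cannot be tuned to recover an exponential factor from breaks that may occur only at bounded depth from the leaves. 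So the approach succeeds only if one proves that frozen behaviour must break at a positive rate along every (or a suitably weighted fraction of every) branch --- a quantitative geometric statement about iterated "second layers" of simplicial hulls that the sketch does not even formulate precisely, and which is essentially equivalent to the open problem itself. Until that is supplied, the proposal should be read as a plausible reduction of the conjecture, not a proof.
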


    \section{Acknowledgements}
    I would like to thank János Pach for introducing me to Gale's theorem, and for his valuable assistance in improving the paper's writing style. I am also thankful for Géza Tóth's helpful comments.%optional


\begin{thebibliography}{1}

\bibitem{triang3}
\textsc{O. Aichholzer, V. Alvarez, T. Hackl, A. Pilz, B. Speckmann, B. Vogtenhuber} An Improved Lower Bound on the Minimum Number of Triangulations \textit{SoCG 2016.} \\\href{https://doi.org/10.4230/LIPIcs.SoCG.2016.7}{\tt\small 
https://doi.org/10.4230/LIPIcs.SoCG.2016.7}

\bibitem{alvarez}
\textsc{L. Alvarez, F. Guichard, P. L. Lions, J. M. Morel.} Axioms and fundamental equations of image processing \textit{Archive for Rational Mechanics and Analysis volume 123, pages 199–257 (1993)} \\\href{https://doi.org/10.1007/BF00375127}{\tt\small 
https://doi.org/10.1007/BF00375127}

\bibitem{even1}
\textsc{G. Ambrus, P. Nielsen, C. Wilson} New estimates for convex layer numbers \textit{Discrete Mathematics,
Volume 344, Issue 7, 2021} \\\href{https://doi.org/10.1016/j.disc.2021.112424}{\tt\small 
https://doi.org/10.1016/j.disc.2021.112424}

\bibitem{matching1}
\textsc{A. Asinowski, G. Rote} Point sets with many non-crossing perfect matchings \textit{Computational Geometry, Volume 68, 2018, Pages 7-33} \\\href{https://doi.org/10.1016/j.comgeo.2017.05.006.}{\tt\small 
https://doi.org/10.1016/j.comgeo.2017.05.006.}

\bibitem{Erdos-szekeres}
\textsc{I. Bárány, P. Valtr} A Positive Fraction Erd\H os - Szekeres Theorem., \textit{Discrete Comput Geom 19, 335–342 (1998).}\\\href{https://doi.org/10.1007/PL00009350}{\tt\small 
https://doi.org/10.1007/PL00009350}

\bibitem{even2}
\textsc{I. Choi, W. Joo, M. Kim} The layer number of $\alpha$-evenly distributed point sets \textit{Discrete Mathematics, Volume 343, Issue 10, 2020} \\\href{https://doi.org/10.1016/j.disc.2020.112029.}{\tt\small 
https://doi.org/10.1016/j.disc.2020.112029.}

\bibitem{dum1}
\textsc{A. Dumitrescu} Peeling Sequences, \textit{Mathematics. 2022; 10(22):4287.}\\\href{https://doi.org/10.3390/math10224287}{\tt\small https://doi.org/10.3390/math10224287}

\bibitem{dumremark}
\textsc{A. Dumitrescu} A Remark on the Erdős-Szekeres Theorem. \textit{The American Mathematical Monthly, 112(10), 921–924. (2005)}\\\href{https://doi.org/10.2307/30037633}{\tt\small https://doi.org/10.2307/30037633}

\bibitem{triang2}
\textsc{A. Dumitrescu, A. Schulz, A. Sheffer, Cs.D. T\'oth} Bounds on the Maximum Multiplicity of Some Common Geometric Graphs \textit{SIAM Journal on Discrete Mathematics, 2013.} \\\href{https://doi.org/10.1137/110849407}{\tt\small 
https://doi.org/10.1137/110849407}

\bibitem{Dumitrescu}
\textsc{A. Dumitrescu, G. Tóth} Peeling Sequences, \textit{Discrete \& Computational Geometry, 2023}  \\\href{https://doi.org/10.48550/arXiv.2211.05968}{\tt\small 
https://doi.org/10.48550/arXiv.2211.05968}

\bibitem{grid2}
\textsc{D. Eppstein, S. Har-Peled, G. Nivasch} Grid peeling and the affine curve-shortening flow \textit{2018 Proceedings of the Meeting on Algorithm Engineering and Experiments (ALENEX)} \\\href{https://doi.org/10.1137/1.9781611975055.10}{\tt\small 
https://doi.org/10.1137/1.9781611975055.10}

\bibitem{Gale}
\textsc{D. Gale} Neighboring Vertices on a Convex Polyhedron, \textit{in Linear Inequalities and Related Systems (ed. by H. W. Kuhn and A. W. Tucker), Princeton 1956.}

\bibitem{poly1}
\textsc{A. Garc\'ia; M. Noy; J. Tejel} Lower bounds on the number of crossing-free subgraphs of $K_{N}$ \textit{Computational Geometry: Theory and Applications, 16 (4): 211–221, 2000} \\\href{https://doi.org/10.1016/S0925-7721(00)00010-9.}{\tt\small 
https://doi.org/10.1016/S0925-7721(00)00010-9.}

\bibitem{gerken}
\textsc{T. Gerken} Empty Convex Hexagons in Planar Point Sets \textit{Discrete and Computational Geometry volume 39, pages 239–272 (2008)} \\\href{https://doi.org/10.1007/s00454-007-9018-x}{\tt\small 
https://doi.org/10.1007/s00454-007-9018-x}

\bibitem{grid1}
\textsc{S. Har-Peled, B. Lidick\'y} Peeling the Grid \textit{SIAM Journal on Discrete Mathematics, Vol. 27, Iss. 2 (2013)} \\\href{https://doi.org/10.1137/120892660}{\tt\small 
https://doi.org/10.1137/120892660}

\bibitem{nicolas}
\textsc{C. M. Nicol\'as} The empty hexagon theorem \textit{Discrete Comput. Geom. 38, no. 2, 389–
397. (2007)} \\\href{https://doi.org/10.1007/s00454-007-1343-6}{\tt\small 
https://doi.org/10.1007/s00454-007-1343-6}

\bibitem{Petty}
\textsc{C. M. Petty} Equivalent point arrangements in space and on a sphere. \textit{Arch. Math 27, 645–648 (1976).} \\\href{https://doi.org/10.1007/BF01224733}{\tt\small 
https://doi.org/10.1007/BF01224733}

\bibitem{sapiro}
\textsc{G. Sapiro and A. Tannenbaum} Affine invariant scale-space \textit{International Journal of Computer Vision volume 11, pages25–44 (1993)} \\\href{https://doi.org/10.1007/BF01420591}{\tt\small 
https://doi.org/10.1007/BF01420591}

\bibitem{triang1}
\textsc{M. Sharir, A. Sheffer} Counting Triangulations of Planar Point Sets \textit{The Electronic Journal of Combinatorics, 2011.} \\\href{https://doi.org/10.48550/arXiv.0911.3352}{\tt\small 
https://doi.org/10.48550/arXiv.0911.3352}

\bibitem{poly2}
\textsc{M. Sharir, A. Sheffer, E. Welzl} Counting plane graphs: Perfect matchings, spanning cycles, and Kasteleyn's technique \textit{Journal of Combinatorial Theory, Series A, Volume 120, Issue 4, 2013,
Pages 777-794} \\\href{https://doi.org/10.1016/j.jcta.2013.01.002.}{\tt\small 
https://doi.org/10.1016/j.jcta.2013.01.002.}

\bibitem{Tukey}
\textsc{J.W. Tukey} Mathematics and the Picturing of Data., \textit{in Proceedings of the International Congress of Mathematicians. p. 523-531.}


\end{thebibliography}
\end{document}